\newcommand{\properideal}{%
\mathrel{\ooalign{$\lneq$\cr\raise.22ex\hbox{$\lhd$}\cr}}}
\newcommand{\rank}{\operatorname{rank}}
\theoremstyle{definition}
\numberwithin{equation}{subsubsection}
\newtheorem{theorem}{Theorem}[section]
\newtheorem{corollary}{Corollary}[theorem]
\newtheorem{lemma}[theorem]{Lemma}
\newtheorem{proposition}[theorem]{Proposition}
\newtheorem{definition}[theorem]{Definition}
\newtheorem{hypothesis}[theorem]{Hypothesis}
\numberwithin{equation}{section}
\title[The Regular property of Invariant Rings over Regular Domains]{The Regular property of Invariant Rings over Regular Domains}
\author{Shubham Jaiswal, Tony J. Puthenpurakal}
\address{Department of Mathematics IIT Bombay, Powai, Mumbai 400 076, India.}
\email{sjaiswal@math.iitb.ac.in, tputhen@math.iitb.ac.in}
\subjclass[2020]{13A50, 13H05}
\date{\today}
\keywords{invariant rings, regular domains, regular rings, group cohomology.}
\begin{document}

\begin{abstract}

The main result of this paper is a generalization of the
theorem of Chevalley-Shephard-Todd to the rings of invariants
of pseudo-reflection groups over regular domains. More precisely, let $A$ be a regular domain and let $K$ be its field of fractions. Let $G\subseteq GL_n(A)$ be a finite group. Let $G$ act linearly on $A[X_1,X_2,\dots, X_n]$ (fixing $A$). Assume that $|G|$ is invertible in $A$. We prove that $G\subseteq GL_n(K)$ is generated by pseudo-reflections if and only if $(A[X_1,X_2,\dots, X_n])^G$ is regular.\end{abstract}

\maketitle

\section{Introduction}

We begin by recalling the notion of pseudo-reflections.

\begin{definition}[\S 7.1, \cite{benson1993polynomial}]

Let $K$ be a field. An element $\sigma\in GL_n(K)$ is called a pseudo-reflection if $ord(\sigma)$ is finite and $dim(ker(\sigma-1))=n-1$, i.e. whose fixed points have codimension one.

\end{definition}

We have the famous Chevalley-Shephard-Todd Theorem (Theorem 7.2.1 in \cite{benson1993polynomial}) which states:

\begin{theorem}\label{CST}

Let $K$ be a field. Consider a finite group $G\subseteq GL_n(K)$. Consider the ring\\
$K[X_1,X_2,\dots, X_n]$ and let $G$ act linearly on the ring (fixing $K$). Assume that $|G|$ is invertible in $K$. We have $G\subseteq GL_n(K)$ is generated by pseudo-reflections if and only if $(K[X_1,X_2,\dots, X_n])^G$ is a polynomial ring over $K$.

\end{theorem}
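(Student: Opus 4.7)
Write $R = K[X_1, \ldots, X_n]$ and $S = R^G$, both graded by total degree. Since $|G|$ is invertible in $K$, the Reynolds operator $\rho(f) = |G|^{-1}\sum_g g(f)$ gives an $S$-linear splitting of $S \hookrightarrow R$; hence $R$ is module-finite over $S$, $S$ is graded Noetherian, and $\mathfrak{a} R \cap S = \mathfrak{a}$ for every ideal $\mathfrak{a} \subseteq S$. Write $\mathfrak{m} = S_{>0}$ for the graded maximal ideal.

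For the $(\Rightarrow)$ direction, assume $G$ is generated by pseudo-reflections. The crux is Chevalley's Lemma: if $f_1, \ldots, f_m$ is a minimal homogeneous generating set of $\mathfrak{m}$, then any homogeneous syzygy $\sum_i a_i f_i = 0$ with $a_i \in R$ forces $a_i \in \mathfrak{m} R$ for each $i$. I would prove this by induction on $\max_i \deg a_i$: for any pseudo-reflection $s \in G$ with reflecting hyperplane $\{\ell_s = 0\}$ one has $s(f) - f \in (\ell_s) R$ for every $f \in R$, so applying $s - 1$ to the syzygy and dividing by the nonzerodivisor $\ell_s$ produces a syzygy of strictly smaller maximal coefficient degree; since $G$ is generated by pseudo-reflections, iterating and averaging via $\rho$ lands each $a_i$ in $\mathfrak{m} R$. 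Granted this, a Jacobian/chain-rule argument (suppose a hypothetical algebraic relation $P(f_1, \ldots, f_m) = 0$ of minimal quasi-degree, apply the chain rule to extract a relation $\sum_i (\partial P/\partial Y_i)(f)\cdot(\partial f_i/\partial X_j) = 0$, and combine Chevalley's Lemma with nondegeneracy of the matrix $(\partial f_i/\partial X_j)$) forces algebraic independence of the $f_i$; hence $m = n$ and $S = K[f_1, \ldots, f_n]$.

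For the $(\Leftarrow)$ direction, assume $S = K[f_1, \ldots, f_n]$. Let $H \trianglelefteq G$ be the subgroup generated by the pseudo-reflections of $G$; by the forward direction applied to $H$, $R^H$ is polynomial. I claim the extension $R^G \hookrightarrow R^H$ has no ramification in codimension one: at a codimension-one prime of $R^H$, the inertia of $G$ at any prime of $R$ above it is generated (by the standard calculation for linear actions on polynomial rings) by the pseudo-reflections fixing the corresponding hyperplane, hence lies in $H$ and projects to $1$ in $G/H$. Zariski--Nagata purity then makes $R^G \hookrightarrow R^H$ \'etale; completing at the graded maximal ideals, Hensel's lemma trivializes this \'etale extension of $K[[Y_1, \ldots, Y_n]]$ (both sides being integral domains with residue field $K$), forcing $[G:H] = 1$, that is, $G = H$.

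The main obstacle is Chevalley's Lemma in the $(\Rightarrow)$ direction: descending an $R$-syzygy among invariants to information modulo $\mathfrak{m} R$ is precisely where the pseudo-reflection hypothesis is indispensable, and the inductive bookkeeping together with the final averaging step is delicate. The Zariski--Nagata step in the $(\Leftarrow)$ direction invokes some non-elementary algebraic geometry; an alternative self-contained route is to expand Molien's identity $|G|^{-1}\sum_g \det(1-tg)^{-1} = \prod_i(1-t^{d_i})^{-1}$ around $t = 1$ to two leading orders, derive both $|G| = \prod d_i$ and Shephard--Todd's formula $\#\{\text{pseudo-reflections}\} = \sum(d_i - 1)$, and then combine these identities for $G$ and $H$ with the free-module structure of $R^H$ over $R^G$ to pin down $|G| = |H|$.
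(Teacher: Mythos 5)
The paper never proves Theorem \ref{CST}: it is quoted as the classical Chevalley--Shephard--Todd theorem with a citation to Theorem 7.2.1 of \cite{benson1993polynomial}, and is used downstream as a black box, so your proposal can only be measured against the standard proofs. Your forward direction is essentially the classical Chevalley argument as in the cited source: the syzygy lemma for a minimal homogeneous generating set of the Hilbert ideal $\mathfrak{m}R$, proved by induction using $s(f)-f\in(\ell_s)R$ and Reynolds averaging. (Two small points there: the induction is cleanest coefficientwise, under the hypothesis $f_1\notin(f_2,\dots,f_m)S$, which minimality supplies by graded Nakayama; and the averaging step needs the observation that $\mathfrak{m}R$ is $G$-stable, so the congruences $s(a_i)\equiv a_i \pmod{\mathfrak{m}R}$ propagate from the generating pseudo-reflections to all of $G$.) Your converse, by contrast, is a genuinely different route from the usual Molien-series/degree counting: inertia groups at height-one primes of $R$ consist of semisimple elements (order prime to $\operatorname{char}K$) pointwise fixing a divisor, hence are pseudo-reflection groups contained in $H$; tameness makes $R^H$ over $R^G$ unramified in codimension one; $R^G$ regular and $R^H$ normal let Zariski--Nagata purity, together with flatness (both rings are polynomial, so finiteness gives flatness), upgrade this to finite \'etale; and the Henselian argument at the completed graded maximal ideal (the completion of $R^H$ is local with residue field $K$, since the irrelevant ideal is the only graded prime over $\mathfrak{M}R^H$) forces the rank $[G:H]$ to be $1$. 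This route is sound and buys independence from character theory --- a real advantage, since your Molien alternative requires Brauer lifting once $\operatorname{char}K=p>0$ --- at the cost of invoking purity.

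There is, however, one step that fails as literally written. The hypothesis only assumes $|G|$ invertible in $K$, so $\operatorname{char}K=p>0$ with $p\nmid|G|$ is allowed, and there your finishing move --- ``combine Chevalley's Lemma with nondegeneracy of the matrix $(\partial f_i/\partial X_j)$'' --- breaks down: the Jacobian criterion for algebraic independence is false in characteristic $p$ (the derivative kills $X^p$), and a minimal relation $P(f_1,\dots,f_m)=0$ may satisfy $\partial P/\partial Y_i=0$ for every $i$, i.e.\ $P\in K[Y_1^p,\dots,Y_m^p]$, in which case the chain rule extracts nothing at all. The standard repairs are: first, dispose of the case $P\in K[Y^p]$ by passing to the perfect closure of $K$ (harmless for algebraic independence) and extracting a $p$-th root of $P$, contradicting minimality of $P$; second, do not argue via nondegeneracy of the Jacobian --- instead reorder so that $P_1,\dots,P_k$ minimally generate the $S$-ideal $(P_1,\dots,P_m)$, where $P_i=(\partial P/\partial Y_i)(f)$, apply your syzygy lemma to the chain-rule relations to conclude $\partial f_j/\partial X_l+\sum_{i>k}q_{ij}\,\partial f_i/\partial X_l\in\mathfrak{m}R$ for $j\le k$, then multiply by $X_l$, sum over $l$, and use Euler's identity to contradict minimality of $f_1,\dots,f_m$, with extra care in the Euler step when $p$ divides some degree $d_j$. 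This characteristic-$p$ bookkeeping is the one place where your sketch has a genuine hole rather than mere compression; the architecture of both directions is otherwise correct.
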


Mundelius had generalized Theorem \ref{CST} to the rings of invariants
of pseudo-reflection groups over Dedekind domains (Theorem 3.8 in \cite{MUNDELIUS2022244}).\smallskip

In this paper we establish the following result which is a further generalization of Theorem \ref{CST} to the rings of invariants of pseudo-reflection groups over regular domains.

\begin{theorem} \label{main thm}
  Let $A$ be a regular domain and let $K$ be its field of fractions. Consider a finite group $G\subseteq GL_n(A)$. Consider the ring $A[X_1,X_2,\dots, X_n]$ and let $G$ act linearly on the ring (fixing $A$). Assume that $|G|$ is invertible in $A$. We have $G\subseteq GL_n(K)$ is generated by pseudo-reflections if and only if $(A[X_1,X_2,\dots, X_n])^G$ is regular.
\end{theorem}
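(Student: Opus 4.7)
The overall plan is to reduce, by flat base change, to the case where $A$ is a regular local ring, and then induct on $d := \dim A$, with the base case supplied by Theorem~\ref{CST}. Set $R := A[X_1,\ldots,X_n]$ and $S := R^G$. Regularity is a local property, and since $|G|$ is invertible the Reynolds operator gives $(B\otimes_A R)^G = B\otimes_A R^G$ for any flat $A$-algebra $B$; applied to $B = A_\mathfrak{p}$ for a prime $\mathfrak{p}$ of $A$, this shows it suffices to prove $S$ is regular when $A$ is a regular local ring of finite dimension $d$.

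For $d = 0$, $A$ is a field and $S$ is a polynomial ring over $A$ by Theorem~\ref{CST}, hence regular. For $d\ge 1$, I would choose $t\in\mathfrak{m}_A\setminus\mathfrak{m}_A^2$, so $A':=A/tA$ is a regular local ring of dimension $d-1$. The crucial point is that the reduction map $G\to GL_n(A')$ is injective, with image generated by pseudo-reflections over $K':=\mathrm{Frac}(A')$. Injectivity comes from the following: if $g\equiv I\pmod t$, write $g = I+tN$; then $g^{|g|}=I$ together with $|g|$ invertible in $A$ forces $N\in t\,M_n(A)$, and iterating with Krull's intersection theorem yields $g = I$. For the pseudo-reflection property, if $g\in G$ has $g-I$ of rank $1$ over $K$, then all $2\times 2$ minors of $g-I$ vanish in $A$, hence in $A'$, and $\bar g\ne I$ by the injectivity just shown; thus $\bar g-I$ has rank exactly $1$ over $K'$. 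Combined with $H^1(G,R)=0$ (since $|G|$ is invertible), applying $(-)^G$ to $0\to R\xrightarrow{t}R\to R/tR\to 0$ gives $S/tS \cong (R/tR)^G = A'[X_1,\ldots,X_n]^G$, which is regular by the inductive hypothesis applied to $A'$.

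To pass from regularity of $S/tS$ to regularity of $S$, I would invoke the Hochster--Eagon theorem to conclude that $S$ is Cohen--Macaulay. For a prime $\mathfrak{P}$ of $S$ containing $t$, the element $t$ is a nonzero-divisor in the Cohen--Macaulay local ring $S_\mathfrak{P}$ (since $S\hookrightarrow R$ and $R$ is a domain), and $S_\mathfrak{P}/tS_\mathfrak{P}=(S/tS)_\mathfrak{P}$ is regular; the standard inequality $\mathrm{edim}(S_\mathfrak{P})\le \mathrm{edim}(S_\mathfrak{P}/tS_\mathfrak{P})+1 = \dim S_\mathfrak{P}$ combined with $\mathrm{edim}\ge\dim$ then forces $S_\mathfrak{P}$ to be regular. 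For a prime $\mathfrak{P}$ not containing $t$, the contraction $\mathfrak{P}\cap A$ has height strictly less than $d$, so the inductive hypothesis applied to $A_{\mathfrak{P}\cap A}$ (a regular local ring of smaller dimension, sharing the same fraction field and the same $G$) yields regularity of the corresponding invariant ring, whose further localization is $S_\mathfrak{P}$.

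\textbf{Main obstacle.} The essential subtlety is the descent step: verifying that the pseudo-reflection condition passes to the mod-$t$ reduction and that the reduction map on $G$ is injective. Both rely crucially on $|G|$ being invertible. Once this is in place, the rest of the argument is standard: a clean induction combined with Hochster--Eagon and the ``CM + nzd + regular quotient'' criterion for regularity.
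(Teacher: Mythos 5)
Your proposal is correct and its overall architecture coincides with the paper's: localize to reduce to a regular local base, induct on $\dim A$ with Theorem~\ref{CST} as the base case, kill $H^1(G,R)$ using the Reynolds operator to identify $S/tS$ with $(R/tR)^G$, and descend regularity. The genuine divergence is in the key technical lemma, namely that the pseudo-reflection property survives reduction modulo a regular parameter. The paper proves this (Lemma~\ref{new lemma} and Corollary~\ref{cor}) by an induction on $n$ that constructs, for each pseudo-reflection $\sigma$, an adapted basis of $\mathcal{O}^n$ over the DVR $\mathcal{O}=A_{\mathfrak p}$, and then feeds this through the injectivity of $G\to GL_n(A/\mathfrak p)$. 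Your observation that $\operatorname{rank}_K(\sigma-I)=1$ forces all $2\times 2$ minors of $\sigma-I$ to vanish in $A$, hence in $A/tA$, hence over $\operatorname{Frac}(A/tA)$, combined with injectivity of the reduction map to rule out $\bar\sigma=I$, is strictly simpler: it avoids the basis construction entirely and works uniformly for any quotient of domains. (The paper's lemma does yield more --- an integral normal form for $\sigma$ over $\mathcal{O}$ --- but only the rank statement is used.) The other difference is cosmetic-to-minor: where the paper descends regularity of $S$ from $S/\pi S$ via the maximal homogeneous ideal of the graded ring $S$ (using $\pi\notin\mathfrak M^2$ and the $^*$local principle), you check regularity prime by prime, splitting on whether $t\in\mathfrak P$; your case $t\notin\mathfrak P$ correctly falls back on the inductive hypothesis for $A_{\mathfrak P\cap A}$. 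Note that your appeal to Hochster--Eagon is superfluous: since $t$ is a nonzerodivisor on the domain $S_{\mathfrak P}$ and $S_{\mathfrak P}/tS_{\mathfrak P}$ is regular, the embedding-dimension inequality you state already forces $S_{\mathfrak P}$ to be regular without knowing it is Cohen--Macaulay in advance.
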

\smallskip

One direction of the above theorem follows from Theorem \ref{CST}. Here's the proof.

\begin{proof}
Suppose $(A[X_1,X_2,\dots, X_n])^G$ is regular. Now $G$ acts linearly on the ring $A[X_1,X_2,\dots, X_n]$ (fixing $A$) and $|G|$ is invertible in $A$. As $K$ is the field of fractions of $A$, so $G$ also acts linearly on the ring $K[X_1,X_2,\dots, X_n]$ (fixing $K$) and $|G|$ is invertible in $K$. Now as $(K[X_1,X_2,\dots, X_n])^G$ is a localization of $(A[X_1,X_2,\dots, X_n])^G$, we have that $(K[X_1,X_2,\dots, X_n])^G$ is regular. Therefore by Theorem 5.3.3 in \cite{neusel2002invariant} and by Theorem 19.2 in \cite{matsumura1989commutative}, $(K[X_1,X_2,\dots, X_n])^G$ is a polynomial ring over the field $K$. Hence by Theorem \ref{CST}, $G\subseteq GL_n(K)$ is generated by pseudo-reflections .
\end{proof}

%\begin{example}
%The conclusion of Theorem \ref{main thm} is true, in particular, for the following cases.

    %\begin{enumerate}
     %   \item  $A$ is the ring of integers of a global function field i.e. a finite extension of $\mathbb{F}_q$(t) where $q$ is some power of a prime $p$, and $p\nmid |G|$.\smallskip

     %   \item  $A$ is localization at a prime of the ring of integers of a number field and $|G|$ is non-zero in the residue field.
    %\end{enumerate}
%\end{example}

The subsequent sections of the paper are dedicated to proving the other direction. In Section \ref{DVR}, we prove Theorem \ref{main thm} for the case when $A$ is a DVR as Theorem \ref{thm}. This also leads to an alternate proof for the case of Dedekind domains considered by Mundelius in \cite{MUNDELIUS2022244}. In Section \ref{reg}, we prove Theorem \ref{main thm} for the case when $A$ is a regular local ring as Theorem \ref{thm reg}. Finally in Section \ref{regdom}, we complete the proof of Theorem \ref{main thm}.

%\begin{remark}
%We might wonder whether Theorem \ref{thm} holds if $A$ is a regular domain of higher dimension. However our technique fails when $\dim A \geq 2$, see \ref{t-edit}.
%\end{remark}

\section{Proof of Theorem \ref{main thm} for DVRs}
\label{DVR}

Consider the hypothesis below.

\begin{hypothesis}
\label{hypo dvr}
         Assume that $(\mathcal{O},(\pi))$ is a DVR and $\mathcal{O}_\pi$ is the localization of $\mathcal{O}$ at the multiplicative set $\{\pi^i\}_{i}$ (which is the field of fractions of $\mathcal{O}$). Consider a finite group $G\subseteq GL_n(\mathcal{O})$. Consider the ring $\mathcal{O}[X_1,X_2,\dots, X_n]$ and assume that $G$ acts linearly on the ring (fixing $\mathcal{O}$). Assume that $|G|$ is invertible in $\mathcal{O}$ i.e. $|G|$ is invertible in $\mathcal{O}/\pi\mathcal{O}$.
\end{hypothesis}

In this section, we will prove the following.

\begin{theorem} \label{thm}
 Assume Hypothesis \ref{hypo dvr}. Consider $G\subseteq GL_n(\mathcal{O}_{\pi})$ is generated by pseudo-reflections. Then $(\mathcal{O}[X_1,X_2,\dots, X_n])^G$ is regular.
\end{theorem}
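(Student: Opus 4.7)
The plan is to prove that $R := (\mathcal{O}[X_1,\dots,X_n])^G$ is regular by checking regularity at each prime $\mathfrak{q}$ of $R$, splitting cases according to whether $\pi \in \mathfrak{q}$. Write $B = \mathcal{O}[X_1,\dots,X_n]$, $K = \mathcal{O}_\pi$, and $k = \mathcal{O}/\pi\mathcal{O}$. By Noether's finiteness theorem, $R$ is a finitely generated $\mathcal{O}$-subalgebra of the domain $B$, hence a Noetherian domain in which $\pi$ is a non-zero-divisor. Since $\pi$ is $G$-fixed and $B$ is a domain, localization at $\pi$ commutes with $G$-invariants, so $R[\pi^{-1}] = (K[X_1,\dots,X_n])^G$; by Theorem \ref{CST} this is a polynomial ring over $K$, and hence $R_\mathfrak{q}$ is regular for every prime $\mathfrak{q}$ with $\pi \notin \mathfrak{q}$.

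The remaining case is a prime $\mathfrak{q} \supseteq (\pi)$, which I will handle by identifying $R/\pi R$ with an invariant ring to which Theorem \ref{CST} applies over $k$. The key observation is that the image $\bar G \subseteq GL_n(k)$ of $G$ under entry-wise reduction mod $\pi$ is generated by pseudo-reflections. Indeed, if $s \in G \subseteq GL_n(\mathcal{O})$ is a generating pseudo-reflection, then $s - I$ has rank one over $K$, so all its $2 \times 2$ minors vanish in $\mathcal{O}$; these minors remain zero after reduction, so $\bar s - I$ has rank at most one over $k$, making $\bar s$ either the identity or a pseudo-reflection. Since $|\bar G|$ divides $|G|$ it is invertible in $k$, so Theorem \ref{CST} gives that $k[X_1,\dots,X_n]^{\bar G}$ is a polynomial ring over $k$. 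The Reynolds operator $\rho = |G|^{-1}\sum_{g\in G} g$, available because $|G|$ is invertible, then identifies $R/\pi R$ with $(B/\pi B)^G$: surjectivity is by lift-and-average, and if $r = \pi b \in R$ then $\pi b = g(\pi b) = \pi g(b)$ for every $g \in G$, so $b \in R$ as $B$ is a domain. Since the kernel of $G \to \bar G$ acts trivially on $B/\pi B$, we obtain $R/\pi R \cong k[X_1,\dots,X_n]^{\bar G}$.

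With $R/\pi R$ a polynomial ring over $k$, for any prime $\mathfrak{q} \supseteq (\pi)$ the quotient $R_\mathfrak{q}/\pi R_\mathfrak{q}$ is a localization of a polynomial ring and so is regular. Because $\pi$ is a non-zero-divisor in $R_\mathfrak{q}$, the standard lemma that a Noetherian local ring $(R',\mathfrak{m}')$ with $x \in \mathfrak{m}'$ a non-zero-divisor and $R'/xR'$ regular is itself regular then yields regularity of $R_\mathfrak{q}$, completing the proof. The main obstacle, in my view, is the pseudo-reflection-descends-to-$k$ observation at the heart of the second paragraph: this is the only place where the integrality hypothesis $G \subseteq GL_n(\mathcal{O})$ (rather than merely $G \subseteq GL_n(K)$) is used in an essential way, and without it the classical Chevalley-Shephard-Todd theorem would not directly apply at the residue field.
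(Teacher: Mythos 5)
Your proof is correct, and while it follows the same overall skeleton as the paper --- identify the reduction of the invariant ring modulo $\pi$ with the invariants of $(\mathcal{O}/\pi\mathcal{O})[X_1,\dots,X_n]$ under the reduced group, apply Theorem \ref{CST} over the residue field, and lift regularity across the non-zero-divisor $\pi$ --- it proves the key technical point by a genuinely different and much shorter route. The paper obtains "the image of $G$ in $GL_n(\mathcal{O}/\pi\mathcal{O})$ is generated by pseudo-reflections" as Corollary \ref{cor}, which rests on Lemma \ref{new lemma}, an inductive construction of an explicit $\mathcal{O}$-basis of $\mathcal{O}^n$ diagonalizing each pseudo-reflection, together with the injectivity of reduction (Proposition \ref{prop gen}). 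You get the same conclusion from the observation that the $2\times 2$ minors of $s-I$ vanish in $K$, hence in $\mathcal{O}$, hence modulo $\pi$, so $\operatorname{rank}(\bar{s}-I)\le 1$ and $\bar{s}$ is the identity or a pseudo-reflection (and $\bar{s}$ is automatically semisimple because its order is invertible in the residue field, so no transvection issue arises). This sidesteps Lemma \ref{new lemma} entirely, at the harmless cost of not knowing $\bar{s}\neq I$; the paper's lemma does yield finer integral information about the pseudo-reflections, but none of it is needed for Theorem \ref{thm}. The remaining differences are minor: you use the Reynolds operator where the paper uses vanishing of $H^1(G,R)$ to identify $R^G/\pi R^G$ with $(R/\pi R)^G$ (both hinge on $|G|$ being invertible), and you verify regularity prime by prime, splitting on whether $\pi\in\mathfrak{q}$ and invoking the standard non-zero-divisor criterion, where the paper argues only at the maximal homogeneous ideal of the graded ring; your version is if anything more complete on that last step.
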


We can easily generalize Proposition 3.5 in \cite{puthenpurakal2016two} (which was for $n=2$) for any $n\in \mathbb{N}$ for our case. We provide the proof below for the sake of completeness.

\begin{proposition} \label{prop gen}
Assume Hypothesis \ref{hypo dvr}. For $n\in \mathbb{N}$, the natural map $\eta : G\rightarrow GL_n(\mathcal{O}/\pi \mathcal{O})$ is injective.\end{proposition}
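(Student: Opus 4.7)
The plan is to take an arbitrary element $g \in \ker\eta$ and show $g = I$. Since $\eta(g) = I$, I can write $g = I + \pi M$ for some matrix $M \in M_n(\mathcal{O})$, and the entire problem reduces to proving $M = 0$.

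Let $m$ denote the order of $g$ in $G$. Because $m$ divides $|G|$ and $|G|$ is invertible in $\mathcal{O}/\pi\mathcal{O}$ — equivalently, in the local ring $\mathcal{O}$ itself — the integer $m$ is a unit in $\mathcal{O}$. Since $I$ and $M$ commute, the binomial theorem applied to $g^m = I$ produces the identity
\[
0 \;=\; m\pi M + \binom{m}{2}\pi^2 M^2 + \cdots + \pi^m M^m
\]
in $M_n(\mathcal{O})$.

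The core step is an induction on $j \geq 0$ showing that $M \in \pi^j M_n(\mathcal{O})$. The case $j = 0$ is automatic. Assuming $M = \pi^j N$ with $N \in M_n(\mathcal{O})$, I would substitute into the displayed equation and cancel the common factor $\pi^{j+1}$ (legitimate because $M_n(\mathcal{O})$ is a free, hence torsion-free, $\mathcal{O}$-module) to obtain
\[
mN \;=\; -\binom{m}{2}\pi^{j+1} N^2 - \cdots - \pi^{(m-1)(j+1)} N^m \;\in\; \pi^{j+1} M_n(\mathcal{O}).
\]
Since $m$ is a unit, $N \in \pi^{j+1} M_n(\mathcal{O})$, and therefore $M = \pi^j N \in \pi^{2j+1} M_n(\mathcal{O}) \subseteq \pi^{j+1} M_n(\mathcal{O})$, closing the induction.

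Finally, $\bigcap_{j \geq 0} \pi^j M_n(\mathcal{O}) = 0$ (immediate for the DVR $\mathcal{O}$ acting on a finite free module, or by Krull's intersection theorem), so $M = 0$ and $g = I$. There is no deep obstacle here; the substantive point is simply that invertibility of $|G|$ in $\mathcal{O}$ is exactly what lets one strip off a factor of $\pi$ from $M$ at each inductive step, and torsion-freeness of $M_n(\mathcal{O})$ is what legitimizes the cancellation of $\pi^{j+1}$.
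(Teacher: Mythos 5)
Your proof is correct, but it finishes the argument by a genuinely different mechanism than the paper. Both start the same way: write the kernel element as $I+\pi M$ (the paper writes $I+Y$ with $Y\in \pi M_n(\mathcal{O})$), raise to the power of the group order, and expand by the binomial theorem to get $m\pi M+\binom{m}{2}\pi^{2}M^{2}+\cdots+\pi^{m}M^{m}=0$ (the paper uses $r=|G|$ rather than the order $m$ of the element, which is immaterial since both are units in $\mathcal{O}$). The paper then factors this relation as $YU=0$ with $U=rI+\binom{r}{2}Y+\cdots+Y^{r-1}$, notes that $U\equiv rI$ modulo $\pi$ and hence is invertible by Nakayama's lemma together with the fact that a surjective endomorphism of a Noetherian module is an isomorphism, and kills $Y$ in one stroke. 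You instead run a $\pi$-adic successive approximation: assuming $M\in\pi^{j}M_n(\mathcal{O})$, you cancel $\pi^{j+1}$ from the relation (legitimate by torsion-freeness of $M_n(\mathcal{O})$), use invertibility of $m$ to solve for $N$, and conclude $M\in\pi^{j+1}M_n(\mathcal{O})$; Krull's intersection theorem then gives $M=0$. Your inductive computation is right: with $M=\pi^{j}N$ the $k$-th term is $\binom{m}{k}\pi^{k(j+1)}N^{k}$, so after dividing by $\pi^{j+1}$ every term except $mN$ retains a factor of $\pi^{j+1}$. The paper's factorization is shorter and induction-free; your version isolates exactly which properties are used (invertibility of $m$ and separatedness of the $(\pi)$-adic topology) and carries over verbatim to the height-one prime $(x)$ in the regular local ring setting, where the paper proves the analogous injectivity lemma by repeating its DVR argument.
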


\begin{proof}
    We consider elements of $GL_n(\mathcal{O})$ and $GL_n(\mathcal{O}/\pi \mathcal{O})$ as matrices. Let $W\in G$ with $\eta (W)=I$. Then $W=I+Y$ where all entries of $Y$ are in $\pi \mathcal{O}$. Now $(I+Y)^r=W^r=I$ where $r=|G|$. Thus $$rY+ {r\choose 2} Y^2+\dots +\ {r\choose {r-1}} Y^{r-1} + Y^r=0.$$ Set $$U=rI+\ {r\choose 2} Y+\dots +\ {r\choose {r-1}} Y^{r-2} + Y^{r-1}.$$

    So $YU=0$. The image of $U$ in $GL_n(\mathcal{O}/\pi \mathcal{O})$ is $rI$ which is invertible in $GL_n(\mathcal{O}/\pi \mathcal{O})$ since $r$ is invertible in $\mathcal{O}/\pi \mathcal{O}$. Consider the map $U:\mathcal{O}^n\rightarrow \mathcal{O}^n$. By Nakayama's Lemma it follows that this map is surjective. Since $\mathcal{O}^n$ is noetherian module over $\mathcal{O}$, the map is an isomorphism. Thus $U\in GL_n(\mathcal{O})$. As $YU=0$, we get $Y=0$. So $\eta$ is injective.\end{proof}

The following is an integral lemma for proving Theorem \ref{thm}.

\begin{lemma}\label{new lemma}

   Assume Hypothesis \ref{hypo dvr}. Suppose that $G$ is generated by pseudo-reflections in\\
$GL_n(\mathcal{O}_{\pi})$. Then for $n\geq 2$ and any pseudo-reflection $\sigma\in G\subseteq GL_n(\mathcal{O}_{\pi})$, there exists a basis $\{w_1,w_2,\dots w_n\}$ of $\mathcal{O}^n$ with $\sigma(w_i)=w_i$ for $1\leq i\leq n-1$ and $\sigma(w_n)=\lambda w_n$ where $\lambda$ is an $m$-th root of unity where $m=ord(\sigma)$.

\end{lemma}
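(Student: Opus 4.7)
The idea is to split $\mathcal{O}^n$ integrally into the $\sigma$-fixed part and its complementary eigenline via a Maschke-style averaging argument. I would first record two integrality observations. Let $m := \operatorname{ord}(\sigma)$. Since $m$ divides $|G|$, Hypothesis~\ref{hypo dvr} gives $m \in \mathcal{O}^{\times}$. Next, since $\sigma \in GL_n(\mathcal{O}_\pi)$ is a pseudo-reflection of order $m$, its fixed subspace in $K^n$ (where $K = \mathcal{O}_\pi$) has dimension $n-1$ and its remaining eigenvalue $\lambda$ satisfies $\lambda^m = 1$. As $\lambda$ is integral over $\mathcal{O}$ (being a root of $x^m - 1$) and $\mathcal{O}$ is a DVR, hence integrally closed in $K$, we obtain $\lambda \in \mathcal{O}$.

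Next I would introduce the averaging idempotent
\[
e := \frac{1}{m}\sum_{i=0}^{m-1}\sigma^i \in \operatorname{End}_{\mathcal{O}}(\mathcal{O}^n),
\]
which is well defined since $m$ is a unit in $\mathcal{O}$. A direct check gives $\sigma e = e$ and $e^2 = e$, so $\operatorname{im}(e) = \ker(\sigma - I)\cap \mathcal{O}^n =: M$ and $N := \ker(e)$ yield a decomposition $\mathcal{O}^n = M \oplus N$ of $\mathcal{O}$-modules. Tensoring with $K$, $M \otimes_{\mathcal{O}} K$ is the fixed hyperplane of $\sigma$ and $N \otimes_{\mathcal{O}} K$ is the complementary $\lambda$-eigenline, so $\operatorname{rank}_{\mathcal{O}} M = n-1$ and $\operatorname{rank}_{\mathcal{O}} N = 1$.

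Finally, since $M$ and $N$ are direct summands of $\mathcal{O}^n$ they are finitely generated projective, and over the DVR $\mathcal{O}$ they are therefore free. I would pick any $\mathcal{O}$-basis $w_1,\dots,w_{n-1}$ of $M$ together with any generator $w_n$ of $N$; these form an $\mathcal{O}$-basis of $\mathcal{O}^n$ by the direct-sum decomposition. The equalities $\sigma(w_i) = w_i$ for $1\le i\le n-1$ are built into the construction, and writing $\sigma(w_n) = a\,w_n$ for some $a \in \mathcal{O}$ and then passing to $K$ forces $a = \lambda$, since $N\otimes_{\mathcal{O}} K$ is the $\lambda$-eigenline of $\sigma$ in $K^n$.

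The whole argument is routine once the two integrality facts are in hand: $m \in \mathcal{O}^{\times}$ (so the averaging makes sense) and $\lambda \in \mathcal{O}$ (so the resulting eigenvalue lives in the base ring rather than merely in $K$). Both follow immediately from the hypothesis together with integral-closedness of the DVR $\mathcal{O}$. The only mild obstacle is conceptual: recognizing that these are precisely the two conditions needed to promote the standard field-theoretic diagonalization of a pseudo-reflection to an $\mathcal{O}$-integral one.
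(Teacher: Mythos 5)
Your proof is correct, and it takes a genuinely different route from the paper's. The paper proceeds by induction on $n$: it starts from a diagonalizing basis of $\mathcal{O}_{\pi}^n$, adjusts it by hand into a basis of $\mathcal{O}^n$, and the essential arithmetic input is that $\lambda-1\in\mathcal{O}^{\times}$, which is extracted from the injectivity of the reduction map $G\to GL_n(\mathcal{O}/\pi\mathcal{O})$ (Proposition \ref{prop gen}); this unit is needed to absorb the off-diagonal term via $w_n'=\frac{a_n}{\lambda-1}w_1+w_n$. Your argument replaces all of this with the single averaging idempotent $e=\frac{1}{m}\sum_{i=0}^{m-1}\sigma^i$, which splits $\mathcal{O}^n=M\oplus N$ into the $\sigma$-fixed part and a $\sigma$-stable rank-one complement ($\sigma$ preserves $N=\ker(e)$ because $e\sigma=e$), and freeness of finitely generated projectives over the local ring $\mathcal{O}$ hands you the basis; exactness of localization identifies $M\otimes K$ and $N\otimes K$ with the fixed hyperplane and the $\lambda$-eigenline, pinning down the ranks and the scalar. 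In fact you need even less than you state: the integral-closedness step is superfluous, since once $N$ is free of rank one and $\sigma$-stable you get $\sigma(w_n)=aw_n$ with $a\in\mathcal{O}$ for free, and $a=\lambda$ after tensoring with $K$. What your approach buys: no induction, no reduction modulo $\pi$, no need for $\lambda-1$ to be a unit, and the argument works verbatim over any local domain in which $m$ is invertible. What it does not replace: Corollary \ref{cor} still needs Proposition \ref{prop gen} (i.e.\ $\bar{\lambda}\neq 1$ modulo $\pi$) to conclude that the reduction of $\sigma$ is again a pseudo-reflection rather than the identity, so that input to the paper's argument cannot be dropped --- but that is a statement about the corollary, not about Lemma \ref{new lemma} itself.
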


\begin{proof}
Let $\{v_1, \ldots v_n \}$ be a basis of $(\mathcal{O}_\pi)^n$ such that $\sigma(v_i) = v_i$ for $1 \leq i \leq n-1$ and $\sigma(v_n) = \lambda v_n$.

Let $<\lambda>$ be the subgroup of $\mathcal{O}^{\times}$ generated by $\lambda$. Since $ord(\lambda)=ord(\sigma)$, we have $|ord(\lambda)|\mid |G|$. As $|G|$ is invertible in $\mathcal{O}/\pi \mathcal{O}$, so $|<\lambda>|$ is also invertible in $\mathcal{O}/\pi \mathcal{O}$. By Proposition \ref{prop gen}, the map $<\lambda> \rightarrow (\mathcal{O}/\pi \mathcal{O})^{\times}$ is injective. As $\lambda\neq 1$, we have that the residue of $\lambda$ in $\mathcal{O}/\pi\mathcal{O}$ is $\bar{\lambda}\neq 1$. Thus $\lambda-1 \not \in \pi \mathcal{O}$. So $\lambda -1\in \mathcal{O}^{\times}$.\smallskip

Consider the endomorphism $\epsilon  = (\lambda - 1)^{-1}(\sigma - id) \colon \mathcal{O}^n \rightarrow \mathcal{O}^n$. By considering $\epsilon_\pi$ it follows that $\epsilon$ is an idempotent.
So $\mathcal{O}^n = \ker \epsilon \oplus \ker (1-\epsilon)$. We note that $\rank (\ker (\epsilon)) = n -1$ and $\rank (\ker (1 - \epsilon)) = 1$. On $\ker (\epsilon)$ the map $\sigma$ acts as identity while on $\ker (1-\epsilon)$ the map $\sigma$ acts as multiplication by $\lambda$. The result follows.
\end{proof}

\begin{corollary}\label{cor}
   Assume Hypothesis \ref{hypo dvr}. Suppose that $G$ is generated by pseudo-reflections in $GL_n(\mathcal{O}_{\pi})$. Then for $n\geq 2$, the image of $G$ in $GL_n(\mathcal{O}/\pi\mathcal{O})$ is also generated by pseudo-reflections.
\end{corollary}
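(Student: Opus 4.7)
The plan is to leverage Lemma \ref{new lemma} directly. Since $G$ is generated by pseudo-reflections in $GL_n(\mathcal{O}_\pi)$, it suffices to show that for each pseudo-reflection $\sigma \in G$, its image $\bar{\sigma}$ in $GL_n(\mathcal{O}/\pi\mathcal{O})$ is again a pseudo-reflection; the images of the generators will then generate the image of $G$.

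For a fixed pseudo-reflection $\sigma \in G$, Lemma \ref{new lemma} produces a basis $\{w_1, \dots, w_n\}$ of $\mathcal{O}^n$ with $\sigma(w_i) = w_i$ for $1 \leq i \leq n-1$ and $\sigma(w_n) = \lambda w_n$, where $\lambda$ is a primitive $m$-th root of unity with $m = \text{ord}(\sigma)$. Reducing modulo $\pi$, the set $\{\bar{w}_1, \dots, \bar{w}_n\}$ is a basis of $(\mathcal{O}/\pi\mathcal{O})^n$, and the reduced map $\bar{\sigma}$ satisfies $\bar{\sigma}(\bar{w}_i) = \bar{w}_i$ for $1 \leq i \leq n-1$ and $\bar{\sigma}(\bar{w}_n) = \bar{\lambda}\, \bar{w}_n$.

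The key step, and the only potential obstacle, is to verify that $\bar{\lambda} \neq 1$ in $\mathcal{O}/\pi\mathcal{O}$, so that $\bar{\sigma}$ is a genuine pseudo-reflection rather than the identity. This is already handled by the argument inside the proof of Lemma \ref{new lemma}: since $|\langle \lambda \rangle| = \text{ord}(\lambda)$ divides $|G|$ and is therefore invertible in $\mathcal{O}/\pi\mathcal{O}$, Proposition \ref{prop gen} (applied either directly or in the form used for $\langle \lambda \rangle$) forces the reduction map $\langle \lambda \rangle \to (\mathcal{O}/\pi\mathcal{O})^\times$ to be injective, so $\lambda \neq 1$ implies $\bar{\lambda} \neq 1$.

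With this in hand, $\bar{\sigma}$ fixes the hyperplane spanned by $\bar{w}_1, \dots, \bar{w}_{n-1}$ and acts by a nontrivial root of unity on a complementary line, so it is a pseudo-reflection in $GL_n(\mathcal{O}/\pi\mathcal{O})$. Applying this to a set of pseudo-reflection generators of $G$ shows their images generate the image of $G$ by pseudo-reflections, which completes the proof.
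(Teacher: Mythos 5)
Your proposal is correct and is exactly the argument the paper intends: the paper's proof of this corollary is simply ``Follows from Proposition \ref{prop gen} and Lemma \ref{new lemma},'' and you have spelled out precisely how those two results combine (the lemma gives the good basis, and the injectivity of $\eta$ guarantees $\bar{\lambda}\neq 1$, so each generator reduces to a genuine pseudo-reflection). No gaps; you have just written out the details the authors left implicit.
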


\begin{proof}
   Follows from Proposition \ref{prop gen} and Lemma \ref{new lemma}.
\end{proof}

%\begin{remark}\label{t-edit}
%Our proof of Lemma \ref{new lemma} fails when $\mathcal{O}$ has dimension $\geq 2$. Suppose \\ $\mathcal{O} = k[X, Y]_{(X, Y)}$. Let $v = e_1/x^2 + e_2/y^2 \in K^2$ be such that $\sigma(v) = \lambda v$. Then note that for no choice of $t \in \mathcal{O}$ does $tv \in \mathcal{O}^2 \setminus \mathfrak{m} \mathcal{O}^2$.  So our induction fails at the base case $n = 2$.
%\end{remark}

Now we give the proof of Theorem \ref{thm}.

\begin{proof}

Let $R=\mathcal{O}[X_1, X_2,\dots , X_n]$ and $S=R^G$. Consider the short exact sequence $$0\rightarrow R\xrightarrow{\pi} R \rightarrow R/\pi R \rightarrow 0.$$ We have the long exact sequence $$0\rightarrow R^G\xrightarrow{\pi} R^G \rightarrow (R/\pi R)^G \xrightarrow{\delta} H^1(G,R)\xrightarrow{\pi} H^1(G,R)\rightarrow H^1(G, R/\pi R)\rightarrow \dots $$

%Let $K=ker(\pi)=Img(\delta)$ in the above sequence. Now we have that $K$ is $R^G/\pi R^G$-module.

By Theorem 6.5.8 in \cite{weibel1994introduction} we have $r \cdot H^1(G,R)=0$ where $r=|G|$. %Hence $r\cdot K=0$.
As $r$ is a unit in $\mathcal{O}=(R^G)_0$, we have that it is a unit in $R^G$. Thus $H^1(G,R)=0$. Hence we have the short exact sequence $$0\rightarrow R^G\xrightarrow{\pi} R^G \rightarrow (R/\pi R)^G\rightarrow 0.$$

Thus $R^G/\pi R^G\cong (R/\pi R)^G$. Now $R/\pi R=(\mathcal{O}/\pi \mathcal{O})[X_1,X_2,\dots, X_n]$ and $\mathcal{O}/\pi \mathcal{O}$ is a field. Also by Corollary \ref{cor} we have that $G\subseteq GL_n(\mathcal{O}/\pi \mathcal{O})$ is generated by pseudo-reflections. Thus by Theorem \ref{CST}, $(R/\pi R)^G$ is regular. Let $\mathfrak{M}$ be the maximal homogenous ideal of $R^G$. Since $\pi\not \in \mathfrak{M}^2$. Thus $R^G/\pi R^G$ being regular implies that $S=R^G$ is regular.\end{proof}

\section{Proof of Theorem \ref{main thm} for regular local rings}
\label{reg}

Consider the hypothesis below.

\begin{hypothesis}
    \label{hyp thm reg}

    Assume that $(A,\mathfrak{m})$ is a regular local ring and $K$ is its field of fractions. Consider a finite group $G\subseteq GL_n(A)$. Consider the ring $A[X_1,X_2,\dots, X_n]$ and assume that $G$ acts linearly on the ring (fixing $A$). Assume that $|G|$ is invertible in $A$.
\end{hypothesis}

In this section, we will prove the following.

\begin{theorem}
    \label{thm reg}

Assume Hypothesis \ref{hyp thm reg}. Consider $G\subseteq GL_n(K)$ is generated by pseudo-reflections. Then $(A[X_1,X_2,\dots, X_n])^G$ is regular.

\end{theorem}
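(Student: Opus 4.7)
The plan is to argue by induction on $d=\dim A$. The base cases $d=0$ (Theorem \ref{CST}) and $d=1$ (Theorem \ref{thm}) are already in hand. For the inductive step with $d\ge 2$, choose a regular parameter $\pi\in\mathfrak{m}\setminus\mathfrak{m}^2$; then $A/\pi A$ is regular local of dimension $d-1$, and the localization $A_{(\pi)}$ is a DVR whose fraction field is $K$ and whose residue field is $\operatorname{Frac}(A/\pi A)$. Write $R=A[X_1,\ldots,X_n]$ and $S=R^G$.

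The first half of the argument essentially transcribes the DVR proof: the short exact sequence $0\to R\xrightarrow{\pi}R\to R/\pi R\to 0$ produces a long exact sequence in group cohomology, and since $H^{1}(G,R)$ is annihilated by $|G|$, which is a unit of $A\subseteq S$, one has $H^{1}(G,R)=0$, so that
\[
S/\pi S\ \cong\ (R/\pi R)^{G}\ =\ \bigl((A/\pi A)[X_1,\ldots,X_n]\bigr)^{G}.
\]
To invoke the inductive hypothesis on $A/\pi A$, the image of $G$ in $GL_n(\operatorname{Frac}(A/\pi A))$ must be generated by pseudo-reflections; this is immediate from Corollary \ref{cor} applied to the DVR $\mathcal{O}=A_{(\pi)}$, whose hypotheses are met because $G\subseteq GL_n(K)$ is generated by pseudo-reflections and $|G|$ is invertible in $A_{(\pi)}$. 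Induction then gives that $S/\pi S$ is regular.

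To lift regularity from $S/\pi S$ to $S$, I would check $S_{\mathfrak{p}}$ at each prime $\mathfrak{p}$ of $S$ by splitting on whether $\pi\in\mathfrak{p}$. If $\pi\in\mathfrak{p}$, then $\pi$ is a non-zero divisor in the Noetherian local ring $S_{\mathfrak{p}}$ (because $S$ is a domain) and $S_{\mathfrak{p}}/(\pi)$ is a localization of the regular ring $S/\pi S$, so the standard criterion (a Noetherian local ring whose quotient by a regular element in its maximal ideal is regular is itself regular) yields that $S_{\mathfrak{p}}$ is regular. If $\pi\notin\mathfrak{p}$, set $\mathfrak{q}=\mathfrak{p}\cap A$; then $\mathfrak{q}\subsetneq\mathfrak{m}$, so $A_{\mathfrak{q}}$ is a regular local ring of dimension strictly less than $d$, and the inductive hypothesis applied to $A_{\mathfrak{q}}$ gives that $(A_{\mathfrak{q}}[X_1,\ldots,X_n])^{G}$ is regular. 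Since taking $G$-invariants commutes with the flat localization $A\to A_{\mathfrak{q}}$, the ring $S_{\mathfrak{p}}$ is a further localization of this, hence also regular.

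The only genuinely new ingredient beyond what is already in Section \ref{DVR} is the descent of pseudo-reflections modulo a height-one prime, and this has already been packaged as Corollary \ref{cor}. Everything else is bookkeeping with the long exact cohomology sequence and the standard local criterion for regularity; the main point to be careful about is coordinating the induction so that both types of primes of $S$---those containing $\pi$ and those avoiding it---are simultaneously covered by the dimension reduction.
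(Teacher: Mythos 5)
Your proposal is correct, and its core coincides with the paper's proof: the same induction on $d=\dim A$, the same choice of a height-one prime $(\pi)$ generated by a regular parameter, the same vanishing of $H^1(G,R)$ (killed by $|G|$, a unit) to identify $S/\pi S$ with $\bigl((A/\pi A)[X_1,\dots,X_n]\bigr)^G$, and the same appeal to Corollary \ref{cor} for the DVR $A_{(\pi)}$ to see that the group acting on the quotient is generated by pseudo-reflections inside $GL_n$ of the fraction field of $A/\pi A$ (the paper spells out the compatibility you leave implicit via a commutative diagram and Lemma \ref{prop analogue}, but nothing more is needed). Where you genuinely diverge is the final descent from regularity of $S/\pi S$ to regularity of $S$. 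The paper exploits the grading: $S$ is graded with $S_0=A$ local, so regularity need only be checked at the unique maximal homogeneous ideal $\mathfrak{M}$, which contains $\pi$, and the standard criterion (a Noetherian local ring whose quotient by a nonzerodivisor in the maximal ideal is regular is itself regular) is applied once; this is stated quite tersely in the paper and relies on the graded-local reduction being taken for granted. Your version instead checks every prime $\mathfrak{p}$ of $S$: primes containing $\pi$ are handled by the same standard criterion, and primes avoiding $\pi$ are handled by a second application of the inductive hypothesis, now to the localization $A_{\mathfrak{q}}$ with $\mathfrak{q}=\mathfrak{p}\cap A\subsetneq\mathfrak{m}$, together with the fact that taking $G$-invariants commutes with localization. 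This buys you an argument that never invokes the graded-local principle, at the cost of widening the induction so that it covers localizations of $A$ as well as quotients --- which is harmless, since both $A/\pi A$ and $A_{\mathfrak{q}}$ have dimension strictly less than $d$, as you note. Your second case is, in effect, the same localization step the paper deploys globally in Section \ref{regdom} to pass from regular local rings to arbitrary regular domains, so your proof folds that reduction into the local statement.
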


Now consider the following.

\begin{hypothesis}\label{hyp reg}

Consider the assumptions in Hypothesis \ref{hyp thm reg}. Let $d=\dim (A)$. Now Theorem \ref{thm reg} for the case $d=0$ (i.e. $A$ is a field) follows from Theorem \ref{CST}. Thus we can assume $d\geq 1$. Let $\mathfrak{m}=(x_1,\dots,x_d)$. Set $x=x_1$ and let $\mathfrak{p}$ be the prime ideal $(x)$.

\end{hypothesis}

We have an analogous result to Proposition \ref{prop gen} for regular local rings. Essentially the same proof goes through but we provide it for the sake of completeness.

\begin{lemma}\label{prop analogue}

Assume Hypothesis \ref{hyp reg}. For $n\in \mathbb{N}$, the natural map $\eta : G\rightarrow GL_n(A/\mathfrak{p})$ is injective.\end{lemma}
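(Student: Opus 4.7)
The plan is to follow the same strategy as Proposition \ref{prop gen} almost verbatim. The only substantive change is that $\mathfrak{p}=(x_1)$ is no longer the maximal ideal of the ambient ring $A$; however, since $A$ is local with maximal ideal $\mathfrak{m}\supseteq \mathfrak{p}$, Nakayama's Lemma still applies with respect to $\mathfrak{m}$, and this is all we will need.

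In detail, I would take $W\in G$ with $\eta(W)=I$ and write $W=I+Y$ where all entries of $Y$ lie in $\mathfrak{p}$. Expanding $W^r=I$ with $r=|G|$ exactly as before yields
\[
rY+\binom{r}{2}Y^2+\cdots+\binom{r}{r-1}Y^{r-1}+Y^r=0,
\]
which factors as $YU=0$ with
\[
U=rI+\binom{r}{2}Y+\cdots+\binom{r}{r-1}Y^{r-2}+Y^{r-1}.
\]
Reducing modulo $\mathfrak{p}$, the image of $U$ is $rI$; a fortiori the image of $U$ modulo $\mathfrak{m}$ is $rI$. Since $|G|$ is a unit in $A$, it is a unit in the residue field $A/\mathfrak{m}$, so the image of $U$ in $GL_n(A/\mathfrak{m})$ is invertible. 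Viewing $U$ as a map $A^n\to A^n$, Nakayama's Lemma applied with the maximal ideal $\mathfrak{m}$ then shows that $U$ is surjective; since $A^n$ is a Noetherian $A$-module, $U$ is in fact an isomorphism, so $U\in GL_n(A)$. From $YU=0$ we conclude $Y=0$, hence $W=I$, and $\eta$ is injective.

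There is no serious obstacle here: the only conceptual point to notice is that although the reduction is taken modulo the (in general non-maximal) prime $\mathfrak{p}$, the Nakayama step must be carried out with respect to the maximal ideal $\mathfrak{m}$, which contains $\mathfrak{p}$. Once this is observed, every step transcribes directly from the proof of Proposition \ref{prop gen}, which is exactly why the authors remark that essentially the same proof goes through.
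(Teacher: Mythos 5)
Your proposal is correct and follows essentially the same argument as the paper: write $W=I+Y$, factor the relation from $W^r=I$ as $YU=0$, show $U\in GL_n(A)$ via Nakayama, and conclude $Y=0$. Your explicit remark that the Nakayama step is carried out with respect to $\mathfrak{m}$ (not the non-maximal prime $\mathfrak{p}$) is a small but welcome clarification of a point the paper leaves implicit.
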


\begin{proof}
    We consider elements of $GL_n(A)$ and $GL_n(A/\mathfrak{p})$ as matrices. Let $W\in G$ with $\eta (W)=I$. Then $W=I+Y$ where all entries of $Y$ are in $\mathfrak{p}$. Now $(I+Y)^r=W^r=I$ where $r=|G|$. Thus $$rY+ {r\choose 2} Y^2+\dots +\ {r\choose {r-1}} Y^{r-1} + Y^r=0.$$ Set $$U=rI+\ {r\choose 2} Y+\dots +\ {r\choose {r-1}} Y^{r-2} + Y^{r-1}.$$

    So $YU=0$. The image of $U$ in $GL_n(A/\mathfrak{p})$ is $rI$ which is invertible in $GL_n(A/\mathfrak{p})$ since $r$ is invertible in $A/\mathfrak{p}$. Consider the map $U:A^n\rightarrow A^n$. By Nakayama's Lemma it follows that this map is surjective. Since $A^n$ is noetherian module over $A$, the map is an isomorphism. Thus $U\in GL_n(A)$. As $YU=0$, we get $Y=0$. So $\eta$ is injective.\end{proof}

Now we give the proof of Theorem \ref{thm reg} with notations as in Hypothesis \ref{hyp reg}.
\begin{proof}

We prove the result by induction on $d\geq 1$. Now the result for the case $d=1$ (i.e. $A$ is a DVR) follows from Theorem \ref{thm}. So assume $d\geq 2$ and that the result is known for all regular local rings of dimension $d-1$.\smallskip

 As $A$ is a regular local ring we have that $A_{\mathfrak{p}}$ is a regular local ring of dimension $1$, i.e. a DVR which has $K$ as its field of fractions. Let $\kappa(\mathfrak{p})=A_{\mathfrak{p}}/\mathfrak{p} A_{\mathfrak{p}}$ which is the field of fractions of $A/\mathfrak{p}$. Now $G\subseteq GL_n(K)$ is generated by pseudo-reflections. Thus by Corollary \ref{cor}, we have that the image of $G$ in $GL_n(\kappa(\mathfrak{p}))$ is generated by pseudo-reflections. Note that we have a commutative diagram of ring homomorphisms,

 \[
\begin{tikzcd}
 0 \arrow[r] & A \arrow[d, "\iota"] \arrow[r, "x"] & A \arrow[d, "\iota"] \arrow[r, "\gamma"] & A/\mathfrak{p} \arrow[d, "\mu"] \arrow[r] & 0 \\
  0 \arrow[r] & A_{\mathfrak{p}} \arrow[r, "x"] & A_{\mathfrak{p}} \arrow[r, "\gamma'"] & \kappa(\mathfrak{p}) \ar[r] & 0
\end{tikzcd}
\]

where $\mu$ is the standard ring homomorphism of $A/\mathfrak{p}$ to its field of fractions which %Suppose $\mu(\bar{a})=0$ for some $a\in A$. Then $0=\mu(\gamma(a))=\gamma'(\iota(a))$. Hence $\frac{a}{1}=x \frac{a'}{b'}$ where $a'\in A$ and $b'\in A\backslash \mathfrak{p}$. As $ab'\in \mathfrak{p}$, so $a\in \mathfrak{p}$ and $\bar{a}=0$. Thus $\mu$
is injective. As we have the above commutative diagram, we also have the below commutative diagram of group homomorphisms with $\mu$ being injective.

\[
\begin{tikzcd}
GL_n(A) \arrow[d, "\iota"] \arrow[r, "\gamma"] & GL_n(A/\mathfrak{p}) \arrow[d, "\mu"]  \\
    GL_n(A_{\mathfrak{p}}) \arrow[r, "\gamma'"] & GL_n(\kappa(\mathfrak{p}))
\end{tikzcd}
\]

We have that $\gamma|_G : G\rightarrow GL_n(A/\mathfrak{p})$ is injective by Lemma \ref{prop analogue}. Thus we have that $\mu\circ \gamma|_G$ is injective. Now $\gamma'\circ \iota|_G=\mu\circ \gamma|_G$. So the image of $G$ in $GL_n(\kappa(\mathfrak{p}))$ is generated by pseudo-reflections. By induction we have $(A/\mathfrak{p}\ [X_1,X_2,\dots X_n])^G$ is regular. Let $R=A [X_1, X_2,\dots , X_n]$ and $S=R^G$. Consider the short exact sequence $$0\rightarrow R\xrightarrow{x} R \rightarrow R/\mathfrak{p} R \rightarrow 0.$$ We have the long exact sequence $$0\rightarrow R^G\xrightarrow{x} R^G \rightarrow (R/\mathfrak{p} R)^G \xrightarrow{\delta} H^1(G,R)\xrightarrow{x} H^1(G,R)\rightarrow H^1(G, R/\mathfrak{p} R)\rightarrow \dots $$

%Let $K=ker(\pi)=Img(\delta)$ in the above sequence. Now we have that $K$ is $R^G/\pi R^G$-module.

By Theorem 6.5.8 in \cite{weibel1994introduction} we have $r \cdot H^1(G,R)=0$ where $r=|G|$. %Hence $r\cdot K=0$.
As $r$ is a unit in $A=(R^G)_0$, we have that it is a unit in $R^G$. Thus $H^1(G,R)=0$. Hence we have the short exact sequence $$0\rightarrow R^G\xrightarrow{x} R^G \rightarrow (R/\mathfrak{p} R)^G\rightarrow 0.$$

Thus $R^G/\mathfrak{p} R^G\cong (R/\mathfrak{p} R)^G$ which is regular. Let $\mathfrak{M}$ be the maximal homogenous ideal of $R^G$. Since $x\not \in \mathfrak{M}^2$. Thus $R^G/\mathfrak{p} R^G$ being regular implies that $S=R^G$ is regular.\end{proof}

\section{Proof of Theorem \ref{main thm} for regular domains} \label{regdom}

Now we complete the proof of Theorem \ref{main thm}.

\begin{proof}
    Now $A$ is a regular domain and $K$ is its field of fractions. Let $P$ be a maximal ideal of $A$. So $(A_P,P A_P)$ is a regular local ring with $K$ as the field of fractions. Also, as $G$ is a subgroup of $GL_n(A)$, it is naturally a subgroup of $GL_n(A_P)$. Further since $|G|$ is invertible in $A$, we have $|G|$ is invertible in $A_P$. Thus for proving Theorem \ref{main thm}, it suffices to prove that $(A_P[X_1, X_2,\dots , X_n])^G$ is regular which follows from Theorem \ref{thm reg}.
\end{proof}

\medskip

\section{Conflict of interest statement}
The authors have no conflict of interest to declare that are relevant to this article.

\section{Data availability statement}

\emph{The manuscript has no associated data.}

\bigskip

\noindent {\it Acknowledgements:} We would like to thank Prof. Gregor Kemper for making us aware of the work of David Mundelius. We also thank the two referees for many pertinent comments.

\medskip

\bibliographystyle{plain}

\end{document}